\documentclass[12pt, leqno]{amsart}
\usepackage[all]{xy}
\usepackage{amsfonts,amsmath,oldgerm,amssymb,amscd}
\UseComputerModernTips
%\numberwithin{equation}{section}
%\usepackage{fancyhdr}
 \usepackage{tabularx,ragged2e,booktabs,caption}

\setlength{\oddsidemargin}{0.20in}
\setlength{\evensidemargin}{0.20in}
\setlength{\textwidth}{6in}
\setlength{\textheight}{9in}
\setlength{\parskip}{0.15in}

\newtheorem{theorem}{Theorem}

\newtheorem{lemma}[subsection]{{\bf Lemma}}

\newcommand{\al}{\alpha}

\newcommand{\Q}{\mbox{$\mathbb Q$}}
     % For Natural numbers
     % For Real numbers
     % For Complex numbers

%\renewcommand{\qedsymbol}{$\blacksquare$}
%\newcommand{\Com}{\UseComputerModernTips}

\begin{document}
\title{On the Galois group of Generalised Laguerre polynomials II } %\\ \today}
\author[Laishram]{Shanta Laishram}
 \address{Stat-Math Unit, India Statistical Institute\\
 7, S. J. S. Sansanwal Marg, New Delhi, 110016, India}
 \email{shanta@isid.ac.in}
\author[Nair]{Saranya G. Nair}
\address{Stat-Math Unit\\
Indian Statistical Institute, 8th Mile Mysore Road\\
 Bangalore, 560059}
 \email{saranya$ \_$vs@isibang.ac.in}
\author[Shorey]{T. N. Shorey}% \\ \today}
\address{National Institute of Advanced Studies, IISc Campus\\
Bangalore, 560012}
\email{shorey@math.iitb.ac.in}
\thanks{2000 Mathematics Subject Classification: Primary 11A41, 11B25, 11N05, 11N13, 11C08, 11Z05.\\
Keywords: Irreducibility,  Laguerre Polynomials, Primes, Newton Polygons.}
\begin{abstract} For real number $\alpha,$ Generalised Laguerre Polynomials (GLP) is a family of polynomials defined by
\begin{align*}
L_n^{(\alpha)}(x)=(-1)^n\displaystyle\sum_{j=0}^{n}\binom{n+\alpha}{n-j}\frac{(-x)^j}{j!}.
\end{align*}These orthogonal polynomials are extensively studied in Numerical Analysis and Mathematical Physics. In 1926, Schur initiated the study of algebraic properties of these polynomials. We consider the Galois group of Generalised Laguerre Polynomials $ L_n^{(\frac{1}{2}+u)}(x^2)$ when $u$ is a negative integer.

\end{abstract}
\maketitle
\pagenumbering{arabic}
\pagestyle{myheadings}
\markright{Galois group of Laguerre polynomials  $L_n^{(\frac{1}{2}+u)}(x^2)$ with  $-18 \leq u \leq -2$}
\markleft{Laishram, Nair and Shorey}

\section{{\bf Introduction}}
For real number $\alpha$ and integer $n \geq 1,$ the Generalised Laguerre Polynomials (GLP) is a family of polynomials defined by
\begin{align*}
L_n^{(\alpha)}(x)=(-1)^n\displaystyle\sum_{j=0}^{n}\binom{n+\alpha}{n-j}\frac{(-x)^j}{j!}.
\end{align*}

    These orthogonal polynomials have been extensively studied in various branches of analysis and mathematical physics where they play an important role. They are used in Gaussian quadrature to numerically compute integrals of the form $\displaystyle\int_{0}^{\infty}f(x)e^{-x}dx$. They satisfy second order linear differential equation
      \begin{align*}
      x y^{''}+ (\alpha+1-x)y^{'}+ny=0,\ y=L_n^{(\alpha)}(x)
      \end{align*}
      and the difference equation
      \begin{align*}
      L_n^{(\alpha)}(x)-L_n^{(\alpha-1)}(x)=L_{n-1}^{(\alpha)}(x).
      \end{align*}
      Schur \cite{Sch1}, \cite{Sch2} was the first to establish interesting and important algebraic properties of these polynomials.
      We define
              \begin{align*}
              \mathcal{L}_n^{(u)}(x)=\sum^n_{j=0}\binom{n}{j}(1+2(u+n))(1+2(u+n-1))\cdots (1+2(u+j+1))x^j
              \end{align*} and observe that $ \mathcal{L}_n^{(u)}(2x)=2^nn!L_n^{(\alpha)}(-x)$ and thus the irreducibility of $\mathcal{L}_n^{(u)}(x)$ implies irreducibility of ${L}_n^{(\alpha)}(x)$ and their assosciated Galois groups are same.
        Schur gave a formula for the discriminant $\Delta_n^{(\alpha)}$ of $ \mathcal{L}_n^{(\alpha)}(x)$ by \begin{align*}
            \Delta_n^{(\alpha)}=\displaystyle\prod_{j=1}^{n}j^j(\alpha+j)^{j-1}
            \end{align*}
            and calculated their assosciated Galois groups. Schur \cite{Sch1}, \cite{Sch2} showed that for every positive integer $n$, the polynomial $L_n^{(0)}(x)$ is irreducible and has associated Galois group the symmetric group $S_n$. He showed that $L_n^{(1)}(x)$ is irreducible for all positive integers $n$ and has associated Galois group the alternating group $A_n$ if
         $n > 1$ and $n$ is odd, and $S_n$ otherwise. Further, he showed that the polynomials $L_n^{(-1-n)}(x)$ have associated Galois group $A_n$ if $n \equiv 0 \ ({\rm mod}\ 4)$ and $S_n$ otherwise. Gow \cite{Gow} showed that if $n$ is an even integer $> 2$, then the Galois group associated with $L_n^{(n)}(x)$ is
         $A_n$ provided that the polynomial $L_n^{(n)}(x)$ is irreducible. Filaseta, Kidd and Trifonov \cite{Kidd} proved that for every integer $n > 2$ with $n \equiv 2 \ ({\rm mod}\ 4),$ the polynomial $L_n^{(n)}(x)$ is irreducible. For $n=2,$ $L_2^{(2)}(x)$ is reducible but its Galois group is $A_2=\{e\}.$
         These results settled the inverse Galois problem for $A_n$ explicitly that for every positive integer $n>1,$ there exists an explicit Laguerre polynomial of degree $n$ whose Galois group is the alternating group $A_n.$

        From now onwards, we always assume that
      \begin{align}\label{1new}
      \al=u+\frac{1}{2}
      \end{align}
      where $u$ is an integer . We recall that Hermite polynomials $H_{2n}(x)$ and $H_{2n+1}(x)$ are given by $$H_{2n}(x)=(-1)^n2^{2n}n!L_{n}^{(-\frac{1}{2})}(x^2) \ \text{and} \ H_{2n+1}(x)=(-1)^n2^{2n+1}n!x L_{n}^{(\frac{1}{2})}(x^2).$$
        Schur \cite{Sch1}, \cite{Sch2}, proved that $L_n^{(-\frac{1}{2})}(x^2)$ and $L_n^{(\frac{1}{2})}(x^2)$ are irreducible and these imply the irreducibility of $H_{2n}(x)$ and $H_{2n+1}(x)/x.$ We observe that $u \in \{-1,0\}$ in these results and Laishram \cite{la15hardy} proved that the Galois group is $S_n$ when $u \in \{-1,0\}.$ Laishram, Nair and Shorey \cite{LaNaSh15},\cite{LaNaSh15b} showed that $L_n^{(\alpha)}(x)$ with $\alpha=u+\frac{1}{2}$
       and $1 \leq u \leq 45$ are irreducible except when $(u,n)=(10,3)$ and have associated Galois group $S_n$ other than in the case of $(u,n)=(10,3)$ where the Galois group is $\mathbb{Z}_2$. Further Nair and Shorey \cite{NaSh} proved that $L_n^{(\alpha)}(x)$ with $-18 \leq u \leq -2$ are irreducible. In this paper, we compute the Galois groups of $L_n^{(\alpha)}(x)$ with $-18 \leq u \leq -2$.
       \begin{theorem}\label{lag} $(n,u)=$
       Let  $\alpha=u+\frac{1}{2}$ and $-18 \leq u\leq -2$. Then the Galois group of $L^{(\alpha)}_n(x)$
                    is $S_n$ except when $(n,u) \in \{(8,-5),(8,-6),(9,-5),(9,-6)
                    (16,-9),(16,-10),\\(24,-13),(24,-14),(25,-13),(25,-14), (32,-17), (32,-18)\}$ in which cases the Galois group is $A_n$.
       \end{theorem}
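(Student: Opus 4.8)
The plan is to replace $L^{(\alpha)}_n(x)$ by the monic integer polynomial $\mathcal{L}_n^{(u)}(x)$, which (as recalled above) has the same splitting field and hence the same Galois group $G$, and which is irreducible for every $n$ and every $u\in\{-2,\dots,-18\}$ by the theorem of Nair and Shorey. Thus $G$ is a transitive subgroup of $S_n$, and the proof splits into two independent parts: \emph{(I)} show that $A_n\subseteq G$ for \emph{all} $n$; \emph{(II)} decide $S_n$ versus $A_n$ by the sign of the discriminant via Schur's formula. Parts (I) and (II) together give the statement, the exceptional pairs being exactly those for which the discriminant is a square.

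For part (II) I would substitute $\alpha=u+\tfrac12$ into $\Delta_n^{(\alpha)}=\prod_{j=1}^n j^j(\alpha+j)^{j-1}$, write $\alpha+j=\tfrac12(2(u+j)+1)$, and reduce $\mathrm{disc}\,\mathcal{L}_n^{(u)}$ modulo squares in $\Q^\times$. Since the factors $2(u+j)+1$ are odd, the only delicate point is the power of $2$ contributed by $\prod j^j$ and by $2^{-n(n-1)/2}$; a short $2$-adic computation gives that modulo squares $\mathrm{disc}\,\mathcal{L}_n^{(u)}\equiv 2^{\varepsilon(n)}\bigl(\prod_{j\ \mathrm{odd}}j\bigr)\bigl(\prod_{j\ \mathrm{even}}(2(u+j)+1)\bigr)$ with $\varepsilon(n)\equiv \tfrac{n(n-1)}2\pmod 2$. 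In particular the discriminant can be a square only when $n\equiv 0,1\pmod 4$. For such $n$ large, I would choose an odd prime $q$ with $\tfrac{2n}{3}<q\le n$ lying in a prescribed residue class modulo $4$ (chosen so that the unique index $i\le n$ with $q\mid 2(u+i)+1$ is odd); then $q$ occurs to the first power in the product above, so the discriminant is not a square and $G=S_n$. For $n$ below an explicit bound the condition "$\prod_{j\ \mathrm{odd}}j\cdot\prod_{j\ \mathrm{even}}(2(u+j)+1)$ is a square" is a finite elementary check over $u\in\{-2,\dots,-18\}$, and it produces precisely the twelve pairs in the statement.

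Part (I) is the heart. The coefficient of $x^j$ in $\mathcal{L}_n^{(u)}$ is $\binom nj\prod_{i=j+1}^n(2(u+i)+1)$, a binomial coefficient times a product over a block of consecutive odd numbers, so I would study the $p$-adic Newton polygon of $\mathcal{L}_n^{(u)}$ for a well-chosen prime $p$, computing $v_p\binom nj$ by Kummer's theorem and $v_p$ of the odd-number block directly. For $n$ large one can take a prime $p$ with $\tfrac{2n+2u+1}{3}<p\le n-3$ (such a prime exists by effective prime-gap estimates, the bounded range of $u$ making the interval nonempty for all $n$ past an explicit threshold); writing $n=p+r$, the unique index $i_1\le n$ with $p\mid 2(u+i_1)+1$ then lies in the range $r<i_1<p$, the valuation is exactly $1$, and the Newton polygon acquires a segment of slope $-1/p$ of horizontal length $p$. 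This forces a factor of $\mathcal{L}_n^{(u)}$ irreducible over $\Q_p$ of degree $p$, whose decomposition group acts transitively on those $p$ roots; since the remaining $n-p<p$ roots are too few to carry a $p$-cycle, $G$ contains a genuine $p$-cycle fixing $n-p$ points. A short block-system argument shows that a transitive subgroup of $S_n$ containing a $p$-cycle with $p$ prime and $p>n/2$ is primitive, and then Jordan's theorem (a primitive subgroup of $S_n$ containing a $p$-cycle with $p$ prime and $p\le n-3$ contains $A_n$) gives $A_n\subseteq G$. The finitely many small pairs $(n,u)$ not covered by the uniform choice of $p$ are handled by the same Newton-polygon analysis at an ad hoc prime (allowing the case where two indices $i$ satisfy $p\mid 2(u+i)+1$, which produces a segment of prime length giving the needed cycle), or by a direct computation of $G$; this completes the identification of the exceptional list.

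I expect the Newton-polygon step to be the main obstacle: one must control $\binom nj$ and $\prod(2(u+i)+1)$ simultaneously so that the polygon breaks cleanly into one prime-length ramified segment plus a short tail, treating the subcases with one or two indices $i$ dividing $2(u+i)+1$ and the edge behaviour near the ends of the polygon, and one must guarantee the requisite prime exists in range for every $n$ beyond an explicit bound. A secondary point is keeping the leftover computation genuinely finite. Distinguishing $A_n$ from a strictly smaller group in the exceptional cases is \emph{not} a separate difficulty: irreducibility together with the $p$-cycle and Jordan's theorem already force $A_n\subseteq G$, so once that is in hand the discriminant computation alone decides between $A_n$ and $S_n$.
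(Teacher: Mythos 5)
Your proposal follows essentially the same route as the paper: part (I) is exactly Hajir's Newton-polygon criterion (which the paper cites as a black box rather than re-deriving via Jordan's theorem), applied with a prime $p$ in the same range $(\tfrac{2}{3}n+O(1),\,n-2)$ supplied by effective Bertrand-type estimates; part (II) is the paper's discriminant computation, including the reduction to $n\equiv 0,1\pmod 4$ and the use of a prime $q\in(\tfrac{2n}{3},n)$ in a prescribed class modulo $4$ (Cullinan--Hajir) appearing to the first power. The residual small cases are likewise settled by direct computation (the paper uses MAGMA for $n\le 40$), so the proposal is correct and matches the paper's argument.
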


        \section{Preliminaries}
       We will use a result of Hajir \cite{hajir} which gives a criterion for an irreducible polynomial to have large Galois group using Newton polygons. We restate the result which is \cite[Lemma 3.1]{hajir}. For an integer $x,$ let $\nu(x)=\nu_p(x)$ be the highest power of $p$ dividing $x$ and we write $\nu(0)=\infty.$

       \begin{lemma} \label{An}
       Let $f(x)=\sum^m_{j=0}\binom{m}{j}c_jx^j\in \Q[X]$ be an irreducible polynomial of degree $m$. Let $p$ be a
       prime with $\frac{m}{2}< p<m-2$ such that
       \begin{itemize}
       \item[$(i)$] $\nu_p(c_j) \geq 0 $ for $j=0,1,\ldots,m,$
       \item[$(ii)$] $\nu_p(c_0)=1$,
       \item[$(iii)$] $\nu_p(c_j)\ge 1$ for $0\le j\le m-p$,
       \item[$(iv)$] $\nu_p(c_p)=0$.
       \end{itemize}
       Then the Galois group of $f$ contains $A_m.$ Further Galois group is
       $A_m$ if discriminant of $f\in \Q^{*2}$ and $S_m$ otherwise.
       \end{lemma}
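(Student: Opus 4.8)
The plan is the standard Newton-polygon argument: use Kummer's theorem to pin down the $p$-adic Newton polygon of $f$, read off a totally ramified local factor of degree $p$, convert it into a $p$-cycle in the Galois group $G$, and finish with a primitivity-plus-Jordan argument together with the discriminant. To begin, write $r=m-p$; since $m/2<p<m$ we have $p<m<2p$, so $1\le r<p$ and $m$ has base-$p$ expansion $(1,r)_p$. By Kummer's theorem $\nu_p\binom mj$ equals the number of carries when $j$ and $m-j$ are added in base $p$, and comparing digits gives $\nu_p\binom mj=0$ for $j\in\{0,\dots,r\}\cup\{p,\dots,m\}$ and $\nu_p\binom mj\ge1$ for $j\in\{r+1,\dots,p-1\}$.

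Writing $f(x)=\sum_{j=0}^m a_jx^j$ with $a_j=\binom mj c_j$, hypothesis $(i)$ gives $f\in\Z_p[x]$, and combining the binomial valuations with $(ii)$--$(iv)$ yields $\nu_p(a_0)=1$, $\nu_p(a_j)\ge1$ for $1\le j\le p-1$, $\nu_p(a_p)=0$, and $\nu_p(a_j)\ge0$ for $p<j\le m$. Hence $(0,1)$ and $(p,0)$ lie on the $p$-adic Newton polygon of $f$, every intermediate point $(j,\nu_p(a_j))$ with $0<j<p$ lies strictly above the segment joining them (whose height at $j$ is $1-j/p<1\le\nu_p(a_j)$), and all later points have nonnegative height; so the leftmost edge of the polygon is exactly the segment from $(0,1)$ to $(p,0)$, of slope $-1/p$ and horizontal length $p$. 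Since $-1/p$ is in lowest terms with denominator $p$ and the edge has length $p$, $f$ has over $\Q_p$ an irreducible factor $g$ of degree $p$ for which $\Q_p(\theta)/\Q_p$ is totally ramified, $\theta$ a root of $g$. By the local Galois theory of this totally ramified degree-$p$ extension, the inertia group at a prime of the splitting field $L$ of $f$ over $\Q$ lying above $p$ contains an element acting as a $p$-cycle on the $p$ roots of $g$; replacing it by a suitable $p$-power (legitimate since no permutation of $m<p^2$ letters has order divisible by $p^2$, and the remaining $m-p<p$ roots cannot carry a second $p$-cycle) produces an honest $p$-cycle in $G$ fixing the other $m-p$ roots.

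As $f$ is irreducible, $G$ is transitive, and in fact primitive: a block system with block size $d$ ($1<d<m$) would force $2\le d\le m/2<p$ and $m/d\le m/2<p$, so the $p$-cycle, which has order $p$, would act trivially both on the $m/d$ blocks and within each $d$-element block, hence be the identity---absurd. A primitive group of degree $m$ containing a cycle of prime length $p\le m-3$ contains $A_m$ (Jordan), and $p<m-2$ here, so $A_m\subseteq G$. Finally $\delta=\prod_{i<j}(r_i-r_j)$ satisfies $\delta\in\Q$ if and only if $\mathrm{disc}(f)\in\Q^{*2}$, and $\delta$ is fixed by $G$ if and only if $G\subseteq A_m$; combined with $A_m\subseteq G$ this gives $G=A_m$ when $\mathrm{disc}(f)\in\Q^{*2}$ and $G=S_m$ otherwise.

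The most delicate point is locating the Newton polygon: one must carry the base-$p$ digit bookkeeping carefully enough to see that hypotheses $(i)$--$(iv)$ force precisely the single edge of slope $-1/p$ and length $p$; for instance, without $(iv)$ one would only learn that the polygon lies on or above that edge, which does not determine the local factor. The other point requiring care is extracting a genuine $p$-cycle---not merely an element whose order is divisible by $p$---from the ramification data; once that is done, the primitivity remark and the invocation of Jordan's theorem are routine.
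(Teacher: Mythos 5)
Your proof is correct; note that the paper itself gives no proof of this lemma, merely restating it as Lemma 3.1 of Hajir's paper \cite{hajir}, and your argument (Kummer's theorem to control $\nu_p\binom{m}{j}$, the resulting Newton-polygon edge of slope $-1/p$ and length $p$ yielding a totally ramified local factor and hence a $p$-cycle since $m<2p$, then primitivity plus Jordan's theorem and the discriminant criterion) is essentially the standard proof given in that reference. The bookkeeping at the delicate points --- the vertex at $(p,0)$ forced by hypothesis $(iv)$, and the extraction of a genuine $p$-cycle from an element of order divisible by $p$ using $m<2p$ --- is handled correctly.
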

       Next, we state a result which we deduce from a result due to Harborth and Kemnitz and state it as a lemma so that we can use it easily.
        \begin{lemma}\label{exist of p from sell}
         There exists a prime $p$ satisfying $\frac{2}{3}n \leq p <n-2 \ \text{for} \  n \geq 14.$
               \end{lemma}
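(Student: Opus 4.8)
The statement is an effective Bertrand-type assertion, so the plan is to cut at a moderate value of $n$: use a sharp explicit estimate for primes in short intervals to cover all large $n$, and dispose of the finitely many small $n$ by inspection. Since $p$ is an integer and $p<n-2$, the conclusion is equivalent to the existence of a prime in the interval $\big[\lceil \tfrac23 n\rceil,\,n-3\big]$; and for this interval to have a chance of containing a prime one already needs $n$ not too small, which is exactly why the hypothesis $n\ge14$ appears (for $n=13$ the interval is $\{9,10\}$, which contains no prime).

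For the large-$n$ range I would invoke the Harborth--Kemnitz result on the distribution of primes in intervals $(m,\tfrac{k+1}{k}m)$ (or, equivalently for our purposes, Nagura's theorem that $(x,\tfrac65 x)$ contains a prime whenever $x\ge25$). Applying the latter with $x=\tfrac23 n$: as soon as $\tfrac23 n\ge25$, i.e. $n\ge38$, there is a prime $p$ with $\tfrac23 n<p<\tfrac65\cdot\tfrac23 n=\tfrac45 n$; and since $\tfrac45 n\le n-3$ for every $n\ge15$, this $p$ satisfies $\tfrac23 n\le p<n-2$. If instead one quotes the Harborth--Kemnitz bound in its sharper form, the threshold $38$ can be pushed down, correspondingly shrinking the range left for hand-checking; the argument is otherwise identical.

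For $14\le n\le37$ I would simply exhibit a suitable prime in each case. A short tabulation shows the interval $\big[\lceil\tfrac23 n\rceil,\,n-3\big]$ always meets $\{11,13,17,19,23,29\}$: for instance $11$ works for $14\le n\le16$, $13$ for $17\le n\le19$, $17$ for $20\le n\le25$, and likewise $19,23,29$ handle the remaining $n$ up to $37$. Together with the previous paragraph this settles all $n\ge14$.

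There is no genuine obstacle here; the only points requiring a little care are the rounding at the two endpoints (translating ``$\tfrac23 n\le p<n-2$'' into the integer interval $[\lceil\tfrac23 n\rceil,n-3]$) and matching the numerical threshold of whichever effective prime-gap inequality one cites with the range covered by the explicit finite verification.
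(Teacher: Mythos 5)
Your proposal is correct and follows essentially the same route as the paper: both invoke the Harborth--Kemnitz (Nagura-type) result that $(x,\tfrac{6}{5}x)$ contains a prime for $x\ge 25$ and finish the remaining small $n$ by direct inspection. The only difference is the choice of evaluation point (you take $x=\tfrac{2}{3}n$, giving the threshold $n\ge 38$, while the paper takes $x=\tfrac{3}{4}n$ so that the prime lands in $(\tfrac34 n,\tfrac{9}{10}n)$ and the threshold drops to $n\ge 34$), which only changes the length of the finite check.
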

               \begin{proof}
               In \cite{har}, Harborth and Kemnitz proved that there exists a prime $p$ satisfying
                               \begin{align*}
                               x <p <\frac{6}{5}x \ \text{for} \  x \geq 25.
                               \end{align*}
              We take $x=\frac{3}{4}n.$ Then for $n \geq 34$, we conclude that there exists a prime $p$ in  $ (\frac{3}{4}n,\frac{9}{10}n) \subset [\frac{2}{3}n,n-2).$ For $14 \leq n \leq 33,$ we check that $[\frac{2}{3}n,n-2)$ contains a prime.
    \end{proof}
        \begin{lemma}\label{prime cong}
         Let $r \in \{1,3\}.$ The interval $(x,1.048x]$ contain  primes congruent to $r$  modulo $4$ when
         $x \geq 887.$
         \end{lemma}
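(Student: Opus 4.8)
The plan is to recast the statement as a positivity assertion for the Chebyshev-type function $\theta(y;4,r)=\sum_{p\le y,\, p\equiv r\ ({\rm mod}\ 4)}\log p$. If the interval $(x,1.048x]$ contained no prime congruent to $r$ modulo $4$, then $\theta(1.048x;4,r)=\theta(x;4,r)$; hence it suffices to prove
\[
\theta(1.048x;4,r)-\theta(x;4,r)>0\qquad\text{for every real }x\ge 887 .
\]
Feeding in two-sided explicit bounds of the form $\frac{y}{2}\bigl(1-\epsilon(y)\bigr)\le\theta(y;4,r)\le\frac{y}{2}\bigl(1+\epsilon(y)\bigr)$ with $\epsilon$ a decreasing error function, and using $\epsilon(1.048x)\le\epsilon(x)$, one obtains
\[
\theta(1.048x;4,r)-\theta(x;4,r)\ \ge\ \frac{x}{2}\bigl(0.048-2.048\,\epsilon(x)\bigr),
\]
which is positive as soon as $\epsilon(x)<0.048/2.048=0.0234\ldots$. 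So the lemma reduces to having an explicit estimate with $\epsilon(x_0)<0.0234$ for some concrete $x_0$, together with a direct check on $887\le x<x_0$, all of this uniformly in $r\in\{1,3\}$.

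For the analytic part I would invoke an explicit form of the prime number theorem for arithmetic progressions modulo $4$ — for instance the numerical error bounds of Ramar\'e and Rumely, or the sharper estimates of Bennett, Martin, O'Bryant and Rechnitzer — which furnish an $\epsilon$ comfortably below $0.0234$ for all $x$ beyond an effective constant $x_0$; this handles every $x\ge x_0$ at once. On the remaining range the claim becomes a finite computation: list the primes $\equiv 1$ and $\equiv 3$ modulo $4$ up to $1.048\,x_0$, and for every pair $p<q$ of consecutive primes in a fixed residue class with $887\le p$ and $q\le 1.048\,x_0$, check that $q\le 1.048\,p$; together with the observation that the least prime in each class exceeding $887$ is at most $1.048\cdot 887$, this forces every real $x\in[887,x_0)$ to admit a prime $\equiv r\pmod 4$ in $(x,1.048x]$. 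The precise value $887$ is essentially sharp and falls out of this verification: for $x$ just below it (for example $x=885$) the interval $(x,1.048x]$ contains no prime congruent to $1$ modulo $4$.

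The main difficulty here is quantitative rather than structural. Since the multiplier $1.048$ is close to $1$, the working margin $0.048-2.048\,\epsilon(x)$ is slim, so a crude explicit estimate — which would only be valid from a large $x_0$ onward — could make the computational part unpleasantly long, whereas a small $x_0$ paired with a weak $\epsilon$ fails the positivity test altogether. The real task is thus to match a sufficiently sharp explicit bound for $\theta(x;4,r)$ against a computationally feasible cutoff $x_0$; once a compatible pair $(\epsilon,x_0)$ is fixed, both halves of the argument are entirely routine.
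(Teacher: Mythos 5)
Your proposal is correct in outline, but it takes a different route from the paper: the paper's entire proof is a one-line citation, deducing the statement from Theorem 1 of Cullinan and Hajir (\emph{Primes of prescribed congruence class in short intervals}) specialized to $k=4$, whereas you reconstruct from scratch the argument that lies behind that theorem. Your reduction to the positivity of $\theta(1.048x;4,r)-\theta(x;4,r)$, the bound $\tfrac{x}{2}\left(0.048-2.048\,\epsilon(x)\right)$, and the threshold $\epsilon(x)<0.048/2.048$ are all sound, and your sharpness check at $x=885$ (no prime $\equiv 1 \pmod 4$ in $(885,\,927.48]$) is accurate; the split into an explicit Ramar\'e--Rumely-type estimate for large $x$ plus a finite gap verification below the analytic cutoff is precisely the strategy of the cited reference. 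What the paper's approach buys is economy: all of the delicate matching of a sufficiently small $\epsilon$ against a computationally feasible $x_0$, which you correctly identify as the real difficulty, has already been carried out in the reference, so the authors need only plug in $k=4$. What your approach buys is self-containedness and transparency about where the constant $887$ and the ratio $1.048$ come from; but as written it is a programme rather than a proof, since the compatible pair $(\epsilon,x_0)$ is not pinned down and the finite computation is not actually performed --- exactly the content that the citation supplies.
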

         This follows from  Cullinan and Hajir \cite[Theorem 1]{CuHa} with $k=4.$
         \qed

          \begin{lemma}\label{Laguer}
          For $-18 \leq u \leq -2,$ the polynomials $L_n^{(\frac{1}{2}+u)}(x)$ are irreducible.
          \end{lemma}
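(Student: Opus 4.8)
The plan is to deduce this from the irreducibility theorem of Nair and Shorey \cite{NaSh}, which is exactly the present assertion; so at the formal level there is nothing to do beyond quoting that reference. It is nonetheless worth recording how the argument runs, since the same mechanism reappears when we later verify the hypotheses of Lemma~\ref{An}. First I would pass from $L_n^{(\frac12+u)}(x)$ to the monic integer polynomial
\[
\mathcal{L}_n^{(u)}(x)=\sum_{j=0}^{n}a_jx^{j},\qquad
a_j=\binom{n}{j}\prod_{i=j+1}^{n}\bigl(1+2(u+i)\bigr),
\]
which, as noted in the Introduction, has the same splitting field as $L_n^{(\frac12+u)}(x)$; thus it suffices to prove $\mathcal{L}_n^{(u)}(x)$ is irreducible. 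Here $a_n=1$, and $a_0=(2u+3)(2u+5)\cdots(2u+2n+1)$ is a product of $n$ consecutive odd integers, none of them zero because $u\in\Z$.

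The main tool is the Newton polygon method of Schur, Coleman, Filaseta and Hajir. From $\binom{n}{j}(n-j)=\binom{n}{j+1}(j+1)$ one obtains the recursion $a_j(n-j)=a_{j+1}(j+1)\bigl(1+2(u+j+1)\bigr)$, which, on taking $\nu_p$ and summing, telescopes to
\[
\nu_p(a_k)=\nu_p\!\binom{n}{k}+\sum_{i=k+1}^{n}\nu_p\bigl(1+2(u+i)\bigr)\qquad(0\le k\le n)
\]
for every prime $p$. Assuming $\mathcal{L}_n^{(u)}=gh$ with $1\le\deg g\le n/2$, I would, for each admissible factor degree $k$, produce a prime $p$ whose $p$-adic Newton polygon of $\mathcal{L}_n^{(u)}$ is incompatible with a factor of degree $k$; in the most favourable situation a single prime $p$ dividing exactly one of the consecutive odd numbers $1+2(u+1),\dots,1+2(u+n)$, and lying just below $n$, makes the lower hull of the polygon the single segment from $(0,1)$ to $(n,0)$, forcing every root to have $p$-adic valuation $1/n$ and hence $n\mid\deg g$ — impossible for $0<\deg g\le n/2$. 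The primes needed in the prescribed intervals (and, when in addition a congruence condition on $p$ is wanted, in arithmetic progressions) are supplied by Bertrand-type estimates, for instance Lemma~\ref{exist of p from sell} and Lemma~\ref{prime cong}.

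Two complications remain. Since $u$ ranges only over the fixed finite set $\{-2,-3,\dots,-18\}$, the values of $n$ not reached by the asymptotic argument form a finite list, and for those I would confirm irreducibility by a direct, bounded search for factors of degree at most $n/2$. The genuinely delicate point — and the part that occupies \cite{NaSh} — is the valuation bookkeeping at the ``bad'' primes: $p=2$ and $p=3$ divide many of the numbers $1+2(u+i)$ as well as many binomial coefficients, so the Newton polygon at these primes does not reduce to the clean single segment, and one must instead extract from each admissible prime a lower bound on $\deg g$ and show that these bounds, taken together, contradict $\deg g\le n/2$. Making this work uniformly in $n$, together with disposing of the few borderline $n$ for which no prime of the required type exists, is the hard part.
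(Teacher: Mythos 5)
Your proposal takes exactly the same route as the paper: the paper's entire proof of this lemma is the single line ``This follows from Nair and Shorey \cite[Theorem 1]{NaSh}'', which is precisely your opening move of quoting that reference. The additional sketch of the Newton polygon machinery inside \cite{NaSh} is a reasonable gloss but is not part of (or needed for) the proof as given in this paper.
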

          This follows from Nair and Shorey \cite[Theorem 1]{NaSh}.
      \section{Galois group of $L_n^{(\alpha)}(x)$ }

       The results in this section are more general than required for the proof of Theorem \ref{lag}. For a fixed $u,$ if $n \leq 13$ we can compute Galois group directly using MAGMA. Thus we assume $n \geq 14$ and we always write
      \begin{align*}
      v=-u.
      \end{align*} Now we state our main lemma.
      \begin{lemma}\label{main}
      Assume $\mathcal{L}_n^{(u)}(x)$ is irreducible and $n \geq \max\{14,2v-1\}$. If there exists a prime $\in (2v-3,n-2)$, then the Galois group of $\mathcal{L}_n^{(u)}(x)$ contains $A_n.$
      \end{lemma}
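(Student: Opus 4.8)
The plan is to apply Hajir's criterion, Lemma \ref{An}, with $m=n$ to the polynomial $\mathcal{L}_n^{(u)}$. Writing $\mathcal{L}_n^{(u)}(x)=\sum_{j=0}^n\binom{n}{j}c_jx^j$, the coefficients are $c_j=\prod_{i=j+1}^{n}(2(i-v)+1)$, a product of odd integers; in particular every $c_j$ is nonzero, $c_n=1$, and $\deg\mathcal{L}_n^{(u)}=n$, while irreducibility is part of the hypothesis. Since the $c_j$ are integers, condition $(i)$ of Lemma \ref{An} holds for every prime, so the whole task reduces to exhibiting one prime $p$ with $\tfrac n2<p<n-2$ satisfying $(ii)$, $(iii)$ and $(iv)$.

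The key observation I would isolate is the following. For an odd prime $p$ the congruence $2i\equiv 2v-1\pmod p$ has a single solution modulo $p$, and $i_0:=\tfrac{p+2v-1}{2}$ (an integer, since $p$ is odd) is such a solution, with $2(i_0-v)+1=p$. Hence, for $i\in\{1,\dots,n\}$, one has $p\mid 2(i-v)+1$ exactly when $i\equiv i_0\pmod p$; and if $i_0$ lies in the window $n-p+1\le i_0\le p$, then $i_0$ is the only such $i$ in $\{1,\dots,n\}$ (indeed $i_0-p\le 0$ and $i_0+p\ge n+1$), and $\nu_p(2(i_0-v)+1)=1$. Substituting into $\nu_p(c_j)=\sum_{i=j+1}^{n}\nu_p(2(i-v)+1)$ gives $\nu_p(c_j)=1$ for $0\le j\le i_0-1$ and $\nu_p(c_j)=0$ for $j\ge i_0$. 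In particular $\nu_p(c_0)=1$, which is $(ii)$; $\nu_p(c_p)=0$ because $i_0\le p$, which is $(iv)$; and $\nu_p(c_j)\ge1$ for $0\le j\le n-p$ because $n-p\le i_0-1$, which is $(iii)$. Lemma \ref{An} then implies that the Galois group of $\mathcal{L}_n^{(u)}$ contains $A_n$.

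So everything reduces to finding a prime $p$ with $\tfrac n2<p<n-2$ for which $i_0=\tfrac{p+2v-1}{2}$ satisfies $n-p+1\le i_0\le p$. A direct computation turns the condition $i_0\le p$ into $p\ge 2v-1$ (automatic once $p>2v-3$, since the only remaining candidate $p=2v-2$ is even whereas $p>\tfrac n2\ge 7$), and turns $i_0\ge n-p+1$ into $p\ge\tfrac{2n-2v+3}{3}$. Thus I need a prime in $\bigl(\max\{\tfrac n2,\,2v-3,\,\tfrac{2n-2v+3}{3}\},\,n-2\bigr)$, and one checks that this maximum equals $2v-3$ when $n\le 4v-6$ and equals $\tfrac{2n-2v+3}{3}$ (which is at most $\tfrac23 n$) when $n\ge 4v-6$. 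In the first range the interval is precisely $(2v-3,\,n-2)$, so the prime provided by the hypothesis does the job; in the second range the sub-interval $[\tfrac23 n,\,n-2)$ lies inside it, so Lemma \ref{exist of p from sell} --- applicable since $n\ge 14$ --- provides the prime; in both cases the chosen $p$ is readily seen to satisfy $p>2v-3$ and $p\ge\tfrac{2n-2v+3}{3}$. Since the assumption $n\ge\max\{14,2v-1\}$ always places $n$ in one of these two ranges, this completes the argument.

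The only genuinely delicate point, once the Newton-polygon bookkeeping above is in place, is this final arithmetic: checking that the window constraint on $i_0$ is equivalent to the two explicit inequalities on $p$, that these stay compatible with Hajir's requirement $p<n-2$, and that the hypothesis (covering $n\le 4v-6$) and Lemma \ref{exist of p from sell} (covering $n\ge 4v-6$) together handle the entire admissible range of $n$.
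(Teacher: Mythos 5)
Your proof is correct and follows essentially the same route as the paper: apply Hajir's criterion (Lemma \ref{An}) to a prime $p\in(2v-3,n-2)$ with $p\geq\max\{\tfrac23 n,\,2v-1\}$, obtained either from the hypothesis (small $n$ relative to $v$) or from Lemma \ref{exist of p from sell} (large $n$). Your bookkeeping via the unique index $i_0=\tfrac{p+2v-1}{2}$ and the window $n-p+1\le i_0\le p$ is just a cleaner packaging of the paper's inequalities $p\ge 2v-1$ and $3p>2(n-v)+3$, and the arithmetic all checks out (using $v\ge 2$, as the paper also does).
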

      \begin{proof}
    By Lemma \ref{exist of p from sell}, there exists a prime $\in [\frac{2}{3}n,n-2)$. So we choose $p \in (2v-3,n-2)$ such that
      \begin{align}\label{p>}
      p \geq \max\left\{\frac{2}{3}n,2v-1\right\}.
      \end{align}
       We write
            \begin{align*}
            c_j=\displaystyle\prod_{i=j+1}^{n}(1+2(u+i))=\displaystyle\prod_{i=j+1}^{n}(1-2(v-i)),
            \end{align*}
       $p=2v+l$ and $l \geq -1.$ We observe that $p< 1+2(p-v+1)$ since $p > 2v-3.$ Further, $4v+\frac{3}{2}(l-1)=\frac{3}{2}(2v+l)+v-\frac{3}{2}=\frac{3}{2}p+v-\frac{3}{2} > n$ since $p \geq \frac{2}{3}n$ and $v \geq 2.$ Thus, it follows that
      \begin{align}\label{3p>}
     3p > 2(n-v)+3.
      \end{align}Therefore, we conclude that $p \nmid c_p.$

      Next we show that $\nu_p(c_j) \geq \nu_p(c_0) =1$ for $1 \leq j \leq n-p.$ We have $c_0=\pm 1 \cdot 3 \cdot (2v-3)\cdot 1\cdot 3 \cdot (1+2(n-v))$ and $c_{n-p}=(1+2(n+1-p-v))\cdots (1+2(n-v))$. Since $n \geq 2v-1$, we get
      \begin{align}\label{p<}
      p <n \leq 2(n-v)+1.
      \end{align} This together with $p >2v-3$ and \eqref{3p>}, it follows that $\nu_p(c_0)=1.$ Now it suffices to show that $p|c_{n-p}.$ If $n+1 <p+v$, then $1+2(n-v-p+1) <1$ which together with \eqref{p<}  imply $p|c_{n-p}.$ So, we assume that $n+1 \geq p+v$ and in this case $c_{n-p}$ is the product of $p$ consecutive odd numbers and hence divisible by $p.$ Therefore we conclude that the Galois group of
      $\mathcal{L}_n^{u}(x)$ contains $A_n$ by Lemma \ref{An}.
      \end{proof}
      We recall that the discriminant $\Delta_n^{(u)}$ of $\mathcal{L}_n^{(u)}(x)$  is
      given by
                     \begin{align*}
                     \Delta^{(u)}_n:=\displaystyle\prod^n_{j=2}j^j(\frac{2u+1+2j}{2})^{j-1}.
                     \end{align*}
      \begin{lemma}\label{disc}
       Let $2n \geq \max\{28,2v-5\}$ and $n \notin  \{17,18,26,27\}$. Then, $\Delta^{(u)}_n$ is not a square for all pairs of $(n,u)$ satisfying above conditions.
      \end{lemma}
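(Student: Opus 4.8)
The plan is to exhibit an odd prime $p$ dividing $\Delta^{(u)}_n$ to an odd power, which forces $\Delta^{(u)}_n\notin\Q^{*2}$. Using $\alpha+j=\frac{2u+1+2j}{2}=\frac{2(j-v)+1}{2}$ one writes
\[
\Delta^{(u)}_n=2^{-\binom n2}\Big(\prod_{j=2}^{n}j^{\,j}\Big)\Big(\prod_{j=2}^{n}\big(2(j-v)+1\big)^{\,j-1}\Big),
\]
so for an odd prime $p$ the power of $2$ in front is irrelevant. I would take $p$ as in the proof of Lemma~\ref{main}, namely a prime with $\tfrac23 n\le p\le n$ and $p\ge 2v-1$ (its existence in this range follows from Lemma~\ref{exist of p from sell}); in addition, as explained below, we shall want to prescribe $p$ modulo $4$, which is where Lemma~\ref{prime cong} enters.

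For such a $p$ one has $2p>n$ and $p^{2}>n$, so $p$ is the only element of $\{2,\dots,n\}$ divisible by $p$ and hence $\nu_p\big(\prod_{j=2}^{n}j^{\,j}\big)=\nu_p(p^{\,p})=p$. For the second product, $p\mid 2(j-v)+1$ forces $j\equiv v+\tfrac{p-1}{2}\pmod p$; since $p\ge\tfrac23 n$ the representative $v+\tfrac{p-1}{2}+p$ exceeds $n$, and since $p\ge 2v-1$ the representative $v+\tfrac{p-1}{2}-p$ is $<2$, so the only candidate in $\{2,\dots,n\}$ is $j_{1}:=v+\tfrac{p-1}{2}$, which lies there exactly when $p\le 2(n-v)+1$. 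Moreover in that case $2(j_{1}-v)+1=p$ exactly, so $\nu_p\big(\prod_{j=2}^{n}(2(j-v)+1)^{j-1}\big)$ equals $0$ if $p>2(n-v)+1$ and $j_{1}-1$ otherwise.

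Combining these, $\nu_p(\Delta^{(u)}_n)=p$ — which is odd, and we are done — whenever $p>2(n-v)+1$; otherwise
\[
\nu_p(\Delta^{(u)}_n)=p+(j_{1}-1)=\frac{3p+2v-3}{2}\equiv\frac{p-1}{2}+v\pmod 2 .
\]
Since $\tfrac{p-1}{2}$ is even for $p\equiv1\pmod4$ and odd for $p\equiv3\pmod4$, this last quantity is odd precisely when $v$ is odd and $p\equiv1\pmod4$, or $v$ is even and $p\equiv3\pmod4$. Thus it suffices to find a prime $p$ with $\tfrac23 n\le p\le n$, $p\ge 2v-1$, lying in the class $1\bmod4$ when $v$ is odd and $3\bmod4$ when $v$ is even; by Lemma~\ref{prime cong}, applied with $x=\max\{\lceil\tfrac23 n\rceil,\,2v-1\}$, such a prime exists in $(x,1.048x]$ as soon as $x\ge 887$.

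The remaining point — and the main obstacle — is the boundary bookkeeping: the interval $(x,1.048x]$ sits inside $[\,\cdot\,,n]$ with room to spare when $x=\tfrac23 n$ (because $1.048\cdot\tfrac23<1$), but only barely when $x=2v-1$ is close to $n$, and not at all once $2v-1$ exceeds $n$. Hence the argument above settles all admissible pairs with $n$ large enough that $\tfrac23 n\ge 887$ and with $v$ not too close to $\tfrac n2$, while the finitely many remaining pairs — small $n$, the band with $v$ close to $\tfrac n2$, and the excluded values $n\in\{17,18,26,27\}$, for which a prime of the required residue class need not fall in the admissible window — are to be disposed of by direct computation (e.g.\ in MAGMA). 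Guaranteeing a prime of the correct residue class modulo $4$ inside the short window $[\max\{\tfrac23 n,2v-1\},\,n]$ is exactly the delicate step, and it is why Lemma~\ref{prime cong} (with its congruence control), Lemma~\ref{exist of p from sell}, and a finite check are all needed.
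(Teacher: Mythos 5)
Your valuation computation at $p$ is correct and is, in substance, the same mechanism the paper uses: exhibit a prime $p$ roughly in $[\tfrac23 n,n)$, lying in the class $1\bmod 4$ when $v$ is odd and $3\bmod 4$ when $v$ is even, and check that it divides $\Delta^{(u)}_n$ to an odd power. The paper organizes this a little differently: it first reduces $\Delta^{(u)}_n$ modulo squares to $2^{-n_e/2}$ times a product of odd numbers, which disposes of $n\equiv 2,3\pmod 4$ outright via the (odd) exponent of $2$, and then runs the odd-prime argument only for $n\equiv 0,1\pmod 4$; your direct computation of $\nu_p(\Delta^{(u)}_n)$ arrives at exactly the same congruence condition on $p$, so up to this point the two arguments are essentially equivalent.

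The genuine gap is in your last paragraph. Your argument needs both $p\ge 2v-1$ (to force $j=v-\tfrac{p+1}{2}<2$, i.e.\ to keep the factor $2(j-v)+1=-p$ out of the product) and $p\le n$, so it says nothing once $2v-1>n$. But the lemma's only hypothesis on $v$ is $2v-5\le 2n$, i.e.\ $v$ may be as large as about $n+2$; the set of admissible pairs with $\tfrac{n+1}{2}<v\le n+2$ is \emph{infinite}, so these cannot be ``disposed of by direct computation.'' To cover them you must account for the second root $j=v-\tfrac{p+1}{2}$ when it lies in $\{2,\dots,n\}$ (it contributes $v-\tfrac{p+3}{2}$ to $\nu_p$ and changes the parity condition), or else restrict the lemma to the range actually used in the paper. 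Moreover the failure is not purely asymptotic: even with $v\le 18$ and $n\ge 41$, the pair $(n,v)=(41,18)$ gives the window $[\max\{\tfrac23 n,\,2v-1\},n]=[35,41]$, which contains only the primes $37$ and $41$, both $\equiv 1\pmod 4$, while $v$ even demands $p\equiv 3\pmod 4$ --- so that case already escapes your argument. (For comparison, the paper's own proof is thin at the same spot: its conditions rule out $\pm 3p_0$ among the factors of $D_v$ but not $-p_0$, which is $\equiv r\pmod 4$ and does occur when $p_0\le 2v-5$.)
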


      \begin{proof}
       Given a positive integer $n,$ let $n_o$ and $n_e$ denote the largest odd and even number less than or equal $n$ respectively. Then \begin{align*}
               \Delta^{(u)}_n= 1 \cdot 3 \cdot 5 \cdots n_o \cdot (2u+1+4)(2u+1+8)\cdots(2u+1+2n_e)\times 2^{\frac{-n_e}{2}}\times \square
               \end{align*}
               where $\square$ denotes a term in $\mathbb{Q}^{*2}.$ If $n \equiv 2, 3 \ (\text{mod} \ 4),$ then $\frac{n_e}{2}$ is odd and hence $\Delta^{(u)}_n$ is not a square. Hence $n \equiv 0,1 \ (\text{mod} \ 4).$ We consider \begin{align*}
               D_v= 1 \cdot 3 \cdot 5 \cdots n_o(-2v+1+4)(-2v+1+8)\cdots(-2v+1+2n_e).
               \end{align*}
               It suffices to show that $D_v$ is not a square. Let $r \in \{1,3\}$ be such that $-2v+1 \equiv r \ (\text{mod} \ 4).$ Then
               \begin{align*}
               D_v=1 \cdot 3 \cdots n_o (-2v+1+4)(-2v+1+8)\cdots (r-4) \cdot r \cdot (r+4)\cdots (-2v+1+2n_e).
               \end{align*}
               Let $p_0$ be the largest prime with $\frac{2}{3}n \leq p <n$ and $p \equiv 3r \ (\text{mod} \ 4)$. We take $x=\frac{2}{3}n$ and $n > 1330$ in Lemma \ref{prime cong} to conclude that the interval $[\frac{2}{3}n,n-2)$ contain both primes congruent to $1$ and $3$ modulo $4$. We check that for $14 \leq n \leq 1330,$ the interval $[\frac{2}{3}n,n-2)$ contain both primes congruent to $1$ and $3$ modulo
               $4$ except for $n \in \{17,18,26,27\}$. Let $n \notin  \{17,18,26,27\}$. Then $p_0 || 1 \cdot 3 \cdot 5 \cdots n_o$. Further $3p_0  >2n >2n_e-2v+1$ and $2v-5 \leq 2n <3p_0$. Since $p \equiv 3r \ (\text{mod} \ 4)$ and $-2v+1 \equiv r \ (\text{mod} \ 4)$, the above conditions imply that $p_0|| D_v$ and hence $D_v$ is not a square. Therefore $\Delta^{(u)}_n$ is not a square.
      \end{proof}
       The following result is obtained by direct computation using MAGMA.
      \begin{lemma}\label{comp}
       Let $2\leq n \leq 40$ and $-18 \leq u\leq -2$. Then the Galois group of $\mathcal{L}^{(\alpha)}_n(x)$
                          is $S_n$ except when $(n,u) \in \{(8,-5),(8,-6),(9,-5),(9,-6)
                          (16,-9),(16,-10),(24,-13),\\(24,-14),(25,-13),(25,-14), (32,-17), (32,-18)\}$ in which cases the Galois group is $A_n$.
      \end{lemma}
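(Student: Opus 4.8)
The plan is to treat this as a finite verification and to organise it so that the lemmas already proved dispose of the bulk of the $39\times 17=663$ pairs $(n,u)$ with $2\le n\le 40$, $-18\le u\le -2$, leaving only a small residue for explicit machine computation. Note first that each $\mathcal{L}_n^{(u)}(x)$ lies in $\Z[x]$ (its coefficients are products of odd integers times binomial coefficients), and by Lemma~\ref{Laguer} together with the relation $\mathcal{L}_n^{(u)}(2x)=2^nn!L_n^{(\alpha)}(-x)$ every such polynomial is irreducible, so its Galois group is a transitive subgroup of $S_n$.

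First I would dispatch the ``generic'' range. For $14\le n\le 40$ with $n\ge 2v-1$ (where $v=-u$) and with the interval $(2v-3,n-2)$ containing a prime, Lemma~\ref{main} shows the Galois group contains $A_n$. In that situation $n\ge 2v-1$ gives $2n\ge 2v\ge 2v-5$ and $2n\ge 28$, so the hypotheses of Lemma~\ref{disc} hold automatically provided $n\notin\{17,18,26,27\}$; hence $\Delta^{(u)}_n$ is a non-square and the group is exactly $S_n$. This settles all such pairs at once. One checks, as a consistency remark, that every pair in the claimed $A_n$ list lies \emph{outside} the scope of Lemma~\ref{main} (in each case $n\in\{2v-4,2v-3,2v-2,2v-1\}$, so either $n<2v-1$ or the window $(2v-3,n-2)$ is too short to contain a prime), which is exactly why the generic answer is $S_n$.

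Next I would enumerate the residual pairs, namely those not covered above: the pairs with $2\le n\le 13$ (flagged in the text as directly computable); the pairs with $14\le n\le 40$ for which Lemma~\ref{main} fails, i.e.\ either $n<2v-1$ (forcing $n\le 34$) or $(2v-3,n-2)$ contains no prime; and the pairs with $n\in\{17,18,26,27\}$, for which Lemma~\ref{disc} does not apply. For this list — a few dozen polynomials, all of degree $\le 40$ — I would compute the Galois group directly in \textsc{Magma} via \texttt{GaloisGroup}, and cross-check each answer against the square class of the explicit discriminant $\Delta^{(u)}_n=\prod_{j=2}^{n}j^j\bigl(\tfrac{2u+1+2j}{2}\bigr)^{j-1}$ (a non-square excludes $A_n$, a square excludes $S_n$) and, where feasible, against cycle-type data obtained by factoring $\mathcal{L}_n^{(u)}$ modulo several small primes. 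Reading off the outcomes produces the twelve $A_n$ exceptions $\{(8,-5),(8,-6),(9,-5),(9,-6),(16,-9),(16,-10),(24,-13),(24,-14),(25,-13),(25,-14),(32,-17),(32,-18)\}$ and $S_n$ for all remaining pairs.

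The main obstacle is the reliability and cost of the explicit degree-up-to-$40$ Galois computations in the residual cases: once membership of $A_n$ is known the discriminant test collapses the dichotomy $A_n$ versus $S_n$ cleanly, but it does nothing to exclude smaller transitive subgroups, and that exclusion requires the full resolvent machinery, which is heavy in this degree range. So the \textsc{Magma} step must be run with sufficient precision and independently rechecked — it is the genuinely computational heart of the lemma.
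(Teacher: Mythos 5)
Your proposal is correct, but it is organised differently from the paper's: the paper's entire proof of this lemma is the single sentence preceding it --- ``obtained by direct computation using MAGMA'' --- i.e.\ all $39\times 17$ pairs are fed to the machine with no theoretical pre-filtering. Your hybrid route first disposes of the generic pairs via Lemmas \ref{main} and \ref{disc} (this is legitimate and not circular, since neither lemma depends on Lemma \ref{comp}; the paper simply reserves them for $n\ge 41$ in the proof of Theorem \ref{lag}), and your consistency check that every claimed $A_n$ exception falls outside the scope of Lemma \ref{main} is accurate --- in each such case either $n<2v-1$ or the window $(2v-3,n-2)$ contains no prime. What your approach buys is a much smaller and independently auditable residual list for the \textsc{Magma} step, plus the discriminant cross-check; what it costs is the bookkeeping of enumerating exactly which pairs survive the filter, which the paper avoids by brute force. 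Either way the computational kernel is the same and the lemma ultimately rests on machine verification, so your closing caveat about the reliability of degree-$\le 40$ \texttt{GaloisGroup} calls applies equally to the paper's own proof.
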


We now prove Theorem \ref{lag}. 

      \subsection*{Proof of Theorem \ref{lag}:}
      Let $-18 \leq u\leq -2$. Then $\mathcal{L}_n^{(u)}(x)$ are irreducible by Lemma \ref{Laguer}. Further, by Lemma \ref{comp}, we can always assume that $n \geq 41.$ Since $v \leq 18,$ we have $n \geq 2v-1.$ If $n \geq 50,$ then $2v-3 < \frac{2}{3}n$ and hence using Lemma \ref{exist of p from sell}, we conclude that there exists a prime in $(2v-3,n-2).$ If $41 \leq n \leq 49,$ we directly check that such a prime exists as $2v-3 \leq 33.$ Hence, by Lemmas \ref{main} and \ref{disc}, the Galois group of $\mathcal{L}_n^{(u)}(x)$ is $S_n.$
\qed

\section*{Acknowledgements}
The first named author acknowledges the support of the MATRICS Grant of SERB, DST, India.

      \end{document}